\documentclass[11pt]{article} 


\usepackage{amsmath, amscd, amsthm, amssymb, mathrsfs, color, framed, pstricks, calc} 
\usepackage[applemac]{inputenc}
\usepackage[all]{xy} 
\usepackage[T1]{fontenc} 
\usepackage{textcomp} 
\usepackage[adobe-utopia]{mathdesign}


\DeclareMathOperator{\Diff}{Diff}

\DeclareMathOperator{\SO}{SO}

\DeclareMathOperator{\U}{U}
\DeclareMathOperator{\SU}{SU}

\DeclareMathOperator{\Lie}{Lie}

\DeclareMathOperator{\Stab}{Stab}
\DeclareMathOperator{\Ham}{Ham}
\DeclareMathOperator{\HVect}{HVect}

\DeclareMathOperator{\Map}{Map} 



\newcommand{\R}{\mathbb R}
\newcommand{\C}{\mathbb C}
\newcommand{\Z}{\mathbb Z}

\newcommand{\s}{\mathcal S}
\newcommand{\G}{\mathcal G}
\newcommand{\X}{\mathcal X}

\newcommand{\A}{\mathcal A}

\newcommand{\T}{\mathcal T}

\newcommand{\diff}{\text{\rm d}}
\newcommand{\del}{\partial}
\newcommand{\delb}{\bar{\del}}

\newcommand{\su}{\mathfrak{su}}

\newcommand{\g}{\mathfrak{g}}


\renewcommand{\P}{\mathbb P}

\renewcommand{\u}{\mathfrak{u}}

\theoremstyle{plain}
	\newtheorem{theorem}{Theorem}
	\newtheorem{proposition}[theorem]{Proposition}
	\newtheorem{lemma}[theorem]{Lemma}

\theoremstyle{definition}
	\newtheorem{definition}[theorem]{Definition}
	\newtheorem{remark}[theorem]{Remark}

\theoremstyle{plain}
	\newtheorem*{theorem*}{Theorem}
	\newtheorem*{proposition*}{Proposition}
	\newtheorem*{lemma*}{Lemma}
	\newtheorem*{corollary*}{Corollary}
	\newtheorem*{conjecture*}{Conjecture}
\theoremstyle{definition}
	\newtheorem*{definition*}{Definition}
	\newtheorem*{remark*}{Remark}
	\newtheorem*{remarks*}{Remarks}
	
\makeatletter
\def\blfootnote{\xdef\@thefnmark{}\@footnotetext}
\makeatother


\begin{document}

\title{The Hamiltonian geometry of the space of unitary connections with symplectic curvature}
\author{Joel Fine}
\date{ }

\maketitle

\begin{abstract}
Let $L \to M$ be a Hermitian line bundle over a compact manifold. Write $\s$ for the space of all unitary connections in $L$ whose curvatures define symplectic forms on $M$ and $\G$ for the group of unitary bundle isometries of $L$, which acts on $\s$ by pull-back. The main observation of this note is that $\s$ carries a $\G$-invariant symplectic structure, there is a moment map for the $\G$-action and that this embeds the components of $\s$ as $\G_0$-coadjoint orbits (where $\G_0$ is the component of the identity). Restricting to the subgroup of $\G$ which covers the identity on $M$, we see that prescribing the volume form of a symplectic structure can be seen as finding a zero of a moment map. When $M$ is a Kähler manifold, this gives a moment-map interpretation of the Calabi conjecture. We also describe some directions for future research based upon the picture outlined here.
\end{abstract}

\section{Introduction}

Let $L \to M$ be a Hermitian line bundle over a compact $2n$-dimensional manifold.  We assume throughout that $c_1(L)$ contains symplectic forms. This note investigates the space $\s$ of all unitary connections $A$ in $L$ for which $\omega_A = \frac{i}{2\pi}F_A$ is a symplectic form on $M$. The group $\G$ of unitary bundle isometries (not necessarily covering the identity on $M$) acts on $\s$ by pull-back. The main observation of this note is the following.

\begin{theorem}\label{main_observation}
~
\begin{itemize}
\item
$\s$ carries a $\G$-invariant symplectic form;
\item
There is an equivariant moment-map $\mu \colon \s \to \Lie (\G)^*$ for the $\G$-action;
\item
The map $\mu$ embeds each component of $\s$ as a coadjoint orbit of $\G_0$, the identity component of $\G$.
\end{itemize}
\end{theorem}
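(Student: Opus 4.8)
The plan is to prove the three assertions in order, since each rests on the previous one; the coadjoint-orbit statement is the substantial one, and I will spend most effort there.

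\textbf{The symplectic form.} Since ``$\omega_A$ symplectic'' is an open condition, $\s$ is an open subset of the affine space $\A$ of unitary connections, whose tangent space at every point is the space $\Omega^1(M;i\R)$ of imaginary-valued $1$-forms. On $\s$ I would set
\[ \Theta_A(a,b) = \frac{1}{(n-1)!}\int_M a\wedge b\wedge \omega_A^{\,n-1}, \qquad a,b\in\Omega^1(M;i\R), \]
the integrand being a genuine real $2n$-form since $a\wedge b$ is real. $\G$-invariance is immediate from naturality of pull-back and change of variables. Closedness I would check on constant vector fields $a,b,c$, using $d\Theta(a,b,c)=a\cdot\Theta(b,c)-b\cdot\Theta(a,c)+c\cdot\Theta(a,b)$ together with $\delta_a\omega_A=\tfrac{i}{2\pi}\,da$ and $d\omega_A=0$; after integration by parts the three terms assemble into $\int_M d(a\wedge b\wedge c\wedge\omega_A^{\,n-2})=0$. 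Weak nondegeneracy follows from the pointwise hard-Lefschetz isomorphism $a\mapsto a\wedge\omega_A^{\,n-1}$ on $1$-forms.

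\textbf{The moment map.} I would pass to the unit circle bundle $\pi\colon P\to M$, on which a connection $A$ becomes a $U(1)$-invariant contact form $\alpha$ with $d\alpha=\pi^*F_A$, and $\G$ becomes the group of $U(1)$-equivariant diffeomorphisms of $P$, so that $\Lie(\G)$ is the $U(1)$-invariant vector fields, sitting in $0\to C^\infty(M;i\R)\to\Lie(\G)\to\mathfrak{X}(M)\to 0$ with the vertical fields $f\partial_\theta$ as the sub. I propose the manifestly equivariant map
\[ \langle\mu(A),\hat\xi\rangle=\frac{1}{(n+1)!}\int_P \alpha(\hat\xi)\;\alpha\wedge(d\alpha)^n, \]
equivariance being naturality of pull-back. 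The defining identity $d\langle\mu,\hat\xi\rangle=\iota_{X_{\hat\xi}}\Theta$ I would verify by Cartan calculus: varying $\alpha$ by $\pi^*b$ produces three terms which the Stokes manipulation above matches to $\Theta(a_{\hat\xi},\,\cdot\,)$, where $a_{\hat\xi}=-2\pi i\,\iota_X\omega_A+dh$ with $h=\alpha(\hat\xi)$. On the vertical subalgebra this collapses to $\langle\mu(A),f\partial_\theta\rangle\propto\int_M f\,\omega_A^n$, so that $\mu$ records the volume form — the link with the Calabi problem.

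\textbf{Coadjoint orbits: transitivity.} The crux is that $\G_0$ acts transitively on each component of $\s$. Given $A,A'$ in one component, the path between them projects to a path of symplectic forms in the fixed class $c_1(L)$, so Moser's theorem produces $\psi\in\Diff_0(M)$ with $\psi^*\omega_{A'}=\omega_A$; lifting $\psi$ to a bundle isometry $\hat\psi$ (possible as $\psi^*L\cong L$) gives $\hat\psi\cdot A'=A+ia$ with $a$ a closed real $1$-form, the two connections now sharing a curvature. To cancel $a$ inside $\G_0$ I would split it: its exact part is killed by a gauge transformation $e^{i\theta}$ in the identity component, while the class $[a]\in H^1(M;\R)$ is realized as the flux of a symplectic isotopy of $(M,\omega_A)$ — the flux homomorphism is onto, every closed $1$-form being $\iota_X\omega_A$ — whose lift shifts $A$ by a closed form in the class $[a]$. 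Both moves lie in $\G_0$.

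\textbf{Finishing, and the main obstacle.} With transitivity the $a_{\hat\xi}$ span $T_A\s$, so $\ker d\mu_A=\{b:\Theta_A(T_A\s,b)=0\}=0$ by nondegeneracy and $\mu$ is an immersion, while equivariance identifies the image of a component with the single coadjoint orbit $\mathrm{Ad}^*_{\G_0}\mu(A)$. Injectivity I would read straight off the formula: $\mu(A)=\mu(A')$ forces $\omega_A^n=\omega_{A'}^n$ (testing against vertical fields) and then $\int_M a(X)\,\omega_A^n=0$ for all $X$ (testing against lifts of $X$), whence $a=0$ and $A=A'$. Immersion, injectivity and equivariance then give the embedding onto a coadjoint orbit. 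The hard part is the transitivity step: Moser together with lifting to $L$ is routine, but remaining inside the identity component $\G_0$ while removing the residual closed $1$-form is precisely what forces the flux homomorphism into the argument, and this cohomological bookkeeping is the delicate point of the whole proof.
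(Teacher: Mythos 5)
Your proposal is correct, and for the first two bullets it is essentially the paper's own argument: the same $2$-form, closedness checked on constant vector fields and assembled into an exact integrand, and a moment map obtained by pairing $A(\eta)$ against the Liouville volume, with the infinitesimal action computed as $\diff(A(\eta))+\iota_{p_*\eta}\omega_A$ via Cartan's formula on $P$. (One small slip there: your normalisation $\frac{1}{(n+1)!}\int_P\alpha(\hat\xi)\,\alpha\wedge(\diff\alpha)^n$ reduces on $M$ to $\frac{1}{(n+1)!}\int_M A(\eta)\,\omega_A^n$, whereas the identity $\diff\langle\mu,\eta\rangle=\Omega(\cdot,a_\eta)$ forces the constant $\frac{1}{n!}$; the Cartan-calculus verification you promise would catch this.) Where you genuinely diverge is the transitivity of $\G_0$ on components of $\s$, which you rightly identify as the crux. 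The paper (Lemma \ref{transitive}) does this in one stroke upstairs on $P$: the infinitesimal action is surjective onto $\Omega^1(M,\R)$ --- given $a$, take the $A$-horizontal lift of the $\omega_A$-dual vector field --- so any path $A(t)$ in $\s$ is the integral curve of a time-dependent element of $\Lie(\G)$, which integrates to a path in $\G_0$ carrying $A(0)$ to $A(1)$. You instead work downstairs: Moser to match curvatures, a lift of the isotopy to $\G_0$, and then the residual closed $1$-form split into an exact part (killed by a gauge transformation in $\Map_0(M,S^1)$) and a cohomology class realised as the flux of a symplectic isotopy. This route is correct --- the flux homomorphism of $(M,\omega_A)$ is onto $H^1(M,\R)$, and lifting the isotopies horizontally does keep you in $\G_0$ while shifting $A$ by a closed form in the prescribed class --- but it is longer and imports the flux machinery, whereas the paper's path-lifting argument disposes of the curvature discrepancy and the closed-form discrepancy simultaneously, needing nothing beyond surjectivity of $\rho_A$. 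What your decomposition buys in return is an explicit picture of which pieces of $\G_0$ do what (Hamiltonian lifts fix $A$, gauge transformations move by $H^1(M,\Z)$-translates of exact forms, symplectic isotopies sweep out $H^1(M,\R)$), which is exactly the structure the paper extracts later in Proposition \ref{fixed_volume_form} when identifying $\s_\omega/\T$ with $H^1(M,\R)/H^1(M,\Z)$. Your injectivity argument (test $\mu$ against vertical fields to equate volume forms, then against horizontal lifts to kill $A-A'$) is the paper's, and your added remark that $\mu$ is an immersion because $\rho_A$ is surjective and $\Omega$ is nondegenerate is a worthwhile point the paper leaves implicit.
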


This is proved in \S\ref{proof_main_observation}. In \S\ref{integrality} we show that the coadjoint orbit of $A \in \s$ is integral if and only if the Weinstein homomorphism $\pi_1(\Ham) \to S^1$ is trivial (where $\Ham = \Ham(\omega_A)$ is the group of Hamiltonian diffeomorphisms).

In \S\ref{prescribe_volume} we consider the restriction of the moment map $\mu$ for the action of  the subgroup $\T = \Map(M, S^1) \subset \G$ of bundle isometries covering the identity on $M$. It turns out that the moment map sends a connection $A$ to the volume form $\omega_A^n/n!$. In this way the problem of prescribing the volume of a symplectic structure can be seen in terms of moment map geometry. 

As we explain in \S\ref{symplectic_case} one outcome of this is that when $b_1(M)=0$ the space of symplectic forms with fixed volume form is naturally a symplectic manifold. When $b_1(M) \neq 0$ this space carries a torus-fibration with fibres of dimension $b_1(M)$ whose total space is naturally a symplectic manifold.

In \S \ref{Kahler_case} we consider the problem of prescribing the volume form of a Kähler metric. This is the renowned Calabi conjecture, now of course Yau's theorem \cite{yau:otrcoackmatcmae1}. Using the picture outlined above we show how the Calabi conjecture can be phrased as finding a zero of the moment map inside a complex group orbit. This puts the problem into the same framework as the Hitchin--Kobayashi correspondence (concerning Hermitian--Einstein connections) and the Donaldson--Tian--Yau conjecture (concerning Kähler metrics with constant scalar curvature). 

The focus of this note is to explain the above geometric picture; no attempt is made here, however, to explore the potential applications. Both \S\ref{the_moment_map} and \S\ref{prescribe_volume} end with a brief discussion of some of these possible directions for future research (some more speculative than others!).

\subsection*{Acknowledgements}

It is a pleasure to acknowledge the influence of conversations with the following people:  Frédéric Bourgeois, Baptiste Chantraine, Dmitri Panov, Simone Gutt, Julien Keller and Chris Woodward.

This work was written up whilst I was a guest at the Simons Center for Geometry and Physics, at the State University of New York, Stony Brook. I am grateful for the hospitality and the stimulating research environment which they provided. 

\section{The space of connections with symplectic curvature}
\label{the_moment_map}

Recall that $L \to M$ is a Hermitian line bundle over a compact $2n$-dimensional manifold. We write $\s$ for the space of all unitary connections $A$ in $L$ for which $\omega_A = \frac{i}{2\pi}F_A$ is a symplectic form on $M$. 

\subsection{Symplectic structure and moment map}\label{proof_main_observation}

We begin by describing a symplectic structure on $\s$. The set $\s$ is open in the space of all connections (for, say, the $C^\infty$ topology). The tangent space $T_A \s$ is the space $\Omega^1(M, i\R)$ of imaginary 1-forms. In order to avoid factors of $\frac{i}{2\pi}$ in all our formulae, we multiply by $-2\pi i$ at the outset in identifying $T_A \s \cong \Omega^1(M, \R)$. Given $A \in \s$, we write $\omega_A = \frac{i}{2\pi}F_A$ for the associated symplectic form. Our conventions mean that for $a\in \Omega^1(M,\R)$ corresponds to an infinitesimal change of $\diff a$ in $\omega_A$.

\begin{definition}
We define a 2-form $\Omega$ on $\s$ by
\[
\Omega_A(a,b) = \frac{1}{(n-1)!}\int_X a \wedge b \wedge \omega_A^{n-1},
\]
for $a,b \in \Omega^1(M ,\R)$.
\end{definition}

\begin{proposition}
The 2-form $\Omega$ is a symplectic form.
\end{proposition}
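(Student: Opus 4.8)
The plan is to verify the two defining properties of a (weakly) symplectic form on the infinite-dimensional manifold $\s$: that $\Omega_A$ is non-degenerate at each $A$, and that $\Omega$ is closed. The key simplification throughout is that $\s$ is an open subset of the affine space of unitary connections, so $T_A\s \cong \Omega^1(M,\R)$ canonically at every point. In particular each $a \in \Omega^1(M,\R)$ defines a translation-invariant (``constant'') vector field on $\s$, any two such fields have vanishing Lie bracket, and these fields span the tangent space at every point. Smoothness of $\Omega$ in $A$ is immediate, since $\omega_A$ depends affinely on the connection (the excerpt records that the tangent vector $a$ changes $\omega_A$ by $\diff a$).

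For non-degeneracy, suppose $a \in \Omega^1(M,\R)$ satisfies $\Omega_A(a,b)=0$ for all $b$; I must show $a=0$. Writing $\Omega_A(a,b) = -\frac{1}{(n-1)!}\int_M b \wedge (a \wedge \omega_A^{n-1})$, the question is whether the $(2n-1)$-form $\alpha := a \wedge \omega_A^{n-1}$ can pair to zero against every $1$-form $b$. Because $\omega_A$ is symplectic, hence pointwise non-degenerate, the Lefschetz map $v \mapsto v \wedge \omega_A^{n-1}$ from $\Lambda^1 T_p^*M$ to $\Lambda^{2n-1}T_p^*M$ is a linear isomorphism at each $p$; thus $\alpha_p \neq 0$ wherever $a_p \neq 0$. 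If $a \neq 0$, pick $p$ with $a_p \neq 0$; the perfect pairing $\Lambda^1 \otimes \Lambda^{2n-1} \to \Lambda^{2n}$ yields a covector $b_p$ with $b_p \wedge \alpha_p \neq 0$, and extending $b_p$ by a bump function supported in a small neighbourhood of $p$ on which the resulting top form has a fixed sign produces a global $b$ with $\int_M b \wedge \alpha \neq 0$, a contradiction. Hence $a=0$, so the map $a \mapsto \Omega_A(a,\cdot)$ is injective and $\Omega_A$ is weakly non-degenerate.

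For closedness I apply the intrinsic formula for $\diff\Omega$ to the triple of constant vector fields $a,b,c$. All bracket terms drop out, leaving
\[
\diff\Omega(a,b,c) = a\cdot\Omega(b,c) - b\cdot\Omega(a,c) + c\cdot\Omega(a,b),
\]
where $a\cdot\Omega(b,c)$ denotes the derivative of $A \mapsto \Omega_A(b,c)$ along $a$. Since $a$ changes $\omega_A$ by $\diff a$, differentiating $\omega_A^{n-1}$ gives $(n-1)\,\diff a \wedge \omega_A^{n-2}$ (valid for $n \ge 2$; the case $n=1$ is trivial, as $\Omega$ is then a constant form on the affine space $\s$). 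Collecting the three terms,
\[
\diff\Omega(a,b,c) = \frac{1}{(n-2)!}\int_M \bigl(b\wedge c\wedge\diff a - a\wedge c\wedge\diff b + a\wedge b\wedge\diff c\bigr)\wedge\omega_A^{n-2}.
\]
The decisive observation is that this integrand is exact: because $\diff\omega_A = 0$, the Leibniz rule gives $\diff\bigl(a\wedge b\wedge c\wedge\omega_A^{n-2}\bigr) = \bigl(b\wedge c\wedge\diff a - a\wedge c\wedge\diff b + a\wedge b\wedge\diff c\bigr)\wedge\omega_A^{n-2}$. As $M$ is closed, Stokes' theorem forces $\diff\Omega(a,b,c)=0$, and since constant fields span $T_A\s$ everywhere this proves $\diff\Omega=0$.

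I expect no serious obstacle here: the affine structure kills the bracket terms, and the only genuine idea is the Stokes manoeuvre identifying the $\diff\Omega$ integrand with $\diff(a\wedge b\wedge c\wedge\omega_A^{n-2})$, which works precisely because $\omega_A$ is closed. The one place demanding care is the sign bookkeeping when commuting the $1$-forms $a,b,c$ past the $2$-forms $\diff a, \diff b, \diff c$ and past $\omega_A^{n-2}$; these latter factors have even degree and so commute freely, which is exactly what makes the derivative of $\omega_A^{n-1}$ and the Leibniz expansion line up term by term.
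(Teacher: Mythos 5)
Your proof is correct. The closedness argument is essentially identical to the paper's: constant vector fields on the affine space kill the bracket terms, the three directional derivatives assemble into $\frac{1}{(n-2)!}\int_M \diff\bigl(a\wedge b\wedge c\wedge\omega_A^{n-2}\bigr)$ because $\omega_A$ is closed, and Stokes finishes it; your sign bookkeeping checks out. Where you genuinely diverge is non-degeneracy. The paper picks an almost complex structure $J$ compatible with $\omega_A$ and exhibits an explicit partner, $\Omega_A(a,Ja)=\frac{1}{n!}\int_M |a|^2\,\omega_A^n>0$, so the dual covector to $a$ is produced globally in one stroke. You instead invoke the pointwise Lefschetz isomorphism $v\mapsto v\wedge\omega_A^{n-1}$ from $\Lambda^1$ to $\Lambda^{2n-1}$, together with the perfect wedge pairing and a bump-function localization. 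Both establish weak non-degeneracy and both ultimately rest on the same linear-algebra fact about symplectic vector spaces; the paper's route is shorter and yields a canonical choice of $b$ (namely $Ja$), which is also the computation reused later when $\Omega$ is shown to restrict to a K\"ahler metric on $\s^{1,1}$, while yours avoids introducing the auxiliary structure $J$ at the cost of a local patching argument. One small point of care in your version: after choosing $b_p$ with $b_p\wedge\alpha_p\neq 0$, you should extend $b_p$ to a covector field first and then cut off, so that the sign of $b\wedge\alpha$ is controlled on the whole support --- which is what you say, and it works by continuity.
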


\begin{proof}
To prove non-degeneracy on $T_A\s$, let $J$ be an almost complex structure on $M$ compatible with $\omega_A$. Then, for a non-zero 1-form $a$, 
\[
\Omega_A(a, Ja) = \frac{1}{n!}\int_X |a|^2\, \omega^n_A >0
\] 
where $|\cdot|^2$ is the Riemannian metric corresponding to $J$ and $\omega_A$.

Next we check $\Omega$ is closed. For this let $a, b, c \in \Omega^1(X,\R)$, thought of as vector fields on $\s$. Then
\[
\diff \Omega (a,b,c)
=
a \cdot \Omega(b,c) + b \cdot \Omega(c,a) + c \cdot \Omega(a,b).
\]
(The formula for general vector fields also includes terms with Lie brackets, but in our case these vanish since the vector fields $a,b,c$ are linear on the affine space of all connections and so commute.) Now, 
\[
a \cdot  \Omega(b,c)
=
\frac{1}{(n-2)!}
\int_M 
b \wedge c \wedge  \diff a \wedge  \omega_A^{n-2}.
\]
Hence
\begin{eqnarray*}
\diff\Omega(a,b,c)
&=&
\frac{1}{(n-2)!}
\int_M
\left(
\diff a \wedge b \wedge c  + \diff b \wedge c \wedge a
+ \diff c  \wedge a \wedge b\right)
\wedge
\omega_A^{n-2},\\
&=&
\frac{1}{(n-2)!}
\int_M
\diff\left(a\wedge b \wedge c \wedge \omega_A^{n-2}\right),\\
&=&
0.
\end{eqnarray*}
\end{proof}

We write $\G$ for the group of bundle isometries of $L$, not necessarily covering the identity on $M$. $\G$ acts by pull-back on $\s$, preserving $\Omega$. To describe the moment map for this action, we first note that given a connection $A$ in $L$ and $\eta \in \Lie (\G)$, one can define a function $A(\eta) \in C^\infty(M, \R)$. Thinking of $\eta$ as a vector field on $L$, the connection $A$ splits $\eta$ into a vertical and a horizontal part. On each fibre, the vertical part is multiplication by $\frac{i}{2\pi}A(\eta)$.

Alternatively, we can think of a connection $A$ as an $S^1$-invariant 1-form on the principal circle bundle $P \to M$ corresponding to $L \to M$. Then $\eta$ is an $S^1$-invariant vector field on $P$ and the function $A(\eta)$ given by pairing the 1-form $A$ with the vector field $\eta$ is the function we seek, pulled back to ~$P$. (Again, normally one considers connections on principal circle bundes as \emph{imaginary} valued 1-forms, but we multiply by $-2\pi i$ throughout and use instead real 1-forms.) This second point of view---via principal bundles---is the one we normally adopt in this section.

\begin{proposition}\label{moment_map_G}
The map $\mu \colon \s \to \Lie(\G)^*$ defined by
\[
\langle \mu(A), \eta \rangle
=
\frac{1}{n!}\int_M A(\eta)\, \omega_A^n
\]
is a $\G$-equivariant moment map for the action of $\G$ on $\s$.
\end{proposition}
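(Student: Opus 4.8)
**

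The plan is to verify two things: (1) that $\mu$ is a moment map, meaning $\diff\langle\mu,\eta\rangle = \iota_{X_\eta}\Omega$ where $X_\eta$ is the vector field on $\s$ generated by $\eta\in\Lie(\G)$, and (2) that $\mu$ is $\G$-equivariant. The main computational task is (1), and the key is to first understand the infinitesimal action $X_\eta$ as a tangent vector in $T_A\s\cong\Omega^1(M,\R)$.

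Let me sketch the prove.

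Let $L \to M$ be a Hermitian line bundle over a compact manifold, $\s$ the space of unitary connections whose curvature defines a symplectic form, $\G$ the unitary bundle isometries, with the symplectic form $\Omega$ and candidate moment map $\mu$ as above.

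I would start by computing the infinitesimal action of $\eta \in \Lie(\G)$ on a connection $A \in \s$. Viewing $A$ as an $S^1$-invariant 1-form on the principal bundle $P$ and $\eta$ as an $S^1$-invariant vector field on $P$, the flow of $\eta$ acts on $A$ by pullback, so the generating vector field $X_\eta$ at $A$ is $-\mathcal{L}_\eta A$. By Cartan's formula, $\mathcal{L}_\eta A = \diff(A(\eta)) + \iota_\eta \diff A$. The key point is that this must descend to a 1-form on $M$ under our identification $T_A\s \cong \Omega^1(M,\R)$, and that the curvature term $\iota_\eta \diff A$ is governed by $\omega_A$. I expect to find that, under the conventions of the paper, $X_\eta$ corresponds to the 1-form $-\diff(A(\eta)) + \iota_{\bar\eta}\omega_A$ on $M$, where $\bar\eta$ is the vector field on $M$ that $\eta$ covers (the horizontal projection). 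The first, exact piece comes from the vertical/gauge part and the second from the genuine diffeomorphism part.

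With $X_\eta$ in hand, I would compute both sides of the moment map equation. For the left side, differentiating $\langle\mu(A),\eta\rangle = \frac{1}{n!}\int_M A(\eta)\,\omega_A^n$ in the direction $a \in \Omega^1(M,\R)$, I must account for the variation of \emph{both} $A(\eta)$ and $\omega_A^n$; since a tangent vector $a$ changes $\omega_A$ by $\diff a$, the variation of $\omega_A^n$ contributes $n\,\diff a\wedge\omega_A^{n-1}$, while the variation of $A(\eta)$ should be $a(\bar\eta)$ (pairing the change in the connection with $\eta$ gives the value of $a$ on the covered vector field). For the right side, I would plug $X_\eta$ into $\Omega_A$, so that $\iota_{X_\eta}\Omega_A$ applied to $a$ equals $\Omega_A(X_\eta, a)=\frac{1}{(n-1)!}\int_M X_\eta\wedge a\wedge\omega_A^{n-1}$. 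I then need to show these two expressions agree. This will hinge on an integration by parts, moving $\diff$ off the $\diff a$ term, together with the pointwise identity $\iota_{\bar\eta}\omega_A\wedge a\wedge\omega_A^{n-1}=\big(a(\bar\eta)\,\omega_A - a\wedge\iota_{\bar\eta}\omega_A\big)\wedge\omega_A^{n-1}/1$, i.e. a standard contraction identity for wedging with $\omega_A^{n}$.

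The main obstacle I anticipate is bookkeeping: getting the identification $T_A\s\cong\Omega^1(M,\R)$ exactly right given the $-2\pi i$ rescaling convention, and correctly tracking the decomposition of $\mathcal{L}_\eta A$ into its horizontal (diffeomorphism) and vertical (gauge) parts so that the contraction identities match the variation of $\mu$ term-for-term. Once $X_\eta$ is pinned down the computation should close cleanly. Finally, equivariance is comparatively routine: since $\G$ acts on $\s$ by pullback preserving $\Omega$, and $\mu$ is built from the natural pairing $A(\eta)$ and the volume form $\omega_A^n$, which transform naturally under the $\G$-action, one checks directly that $\mu(g\cdot A) = \Ad^*_g\,\mu(A)$; I would verify this by substituting $g^*A$ and using the invariance of integration under the diffeomorphism induced by $g$ together with $\mathrm{Ad}$-naturality of the pairing.
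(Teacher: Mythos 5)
Your proposal follows essentially the same route as the paper: compute the infinitesimal action via Cartan's formula as $\mathcal{L}_\eta A = \diff(A(\eta)) + \iota_{p_*\eta}\omega_A$, differentiate $\langle\mu,\eta\rangle$ picking up both the $a(p_*\eta)$ term and the $n\,\diff a\wedge\omega_A^{n-1}$ term, and close the computation with integration by parts plus the contraction identity $\iota_v(\alpha\wedge\omega_A^n)=0$. The only thing left to pin down is the sign bookkeeping you already flag (note that with your convention $X_\eta=-\mathcal{L}_\eta A$ you should get $-\diff(A(\eta))-\iota_{\bar\eta}\omega_A$, not $-\diff(A(\eta))+\iota_{\bar\eta}\omega_A$, which is consistent with the paper's identity $b\cdot\langle\mu,\eta\rangle=\Omega(b,a_\eta)$ for $a_\eta=+\mathcal{L}_\eta A$).
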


\begin{proof}
Given $\eta \in \Lie (\G)$, let $a_\eta \in \Omega^1(M, \R)$ be the vector field on $\s$ corresponding to the infinitesimal action of $\eta$. Let $b \in \Omega^1(M,\R)$ be another vector field on $\s$. The identity to be proved is
$
b \cdot \langle \mu, \eta \rangle
=
\Omega(b,a_\eta)
$.

We begin with the left-hand-side. We use the description in terms of the principal $S^1$-bundle $p \colon P \to M$ given above, in which $A$ is regarded as an $S^1$-invariant 1-form on $P$. The vector field $b \in \Omega^1(M, \R)$ on $\s$ corresponds to an infinitesimal change of $p^*b$ in $A$ and hence an infinitesimal change of $p^*b(\eta) = b(p_*\eta)$ in $A(\eta)$. Meanwhile, the infinitesimal change in $\omega_A$ is $\diff b$. Hence,
$$
b \cdot \langle \mu, \eta \rangle
=
\int_M
\left(
\frac{1}{n!}b(p_*\eta)\, \omega_A^n
+ 
\frac{1}{(n-1)!}A(\eta)\,\diff b \wedge \omega_A^{n-1}
\right).
$$

To compute the right-hand-side of the moment-map identity, still thinking of $A$ as a 1-form on $P$, we have that
\begin{eqnarray*}
a_\eta
	&=&
		\mathcal{L}_\eta(A)\\
	&=& 
		(\diff \circ \iota_\eta + \iota_\eta \circ \diff)A,\\
	&=&
		\diff(A(\eta)) + \iota_{p_*\eta} \omega_A.
\end{eqnarray*}
(We have implicitly identified $a_\eta \in \Omega^1(M, \R)$ and $p^*a_\eta \in \Omega^1(P, \R)$ in the first two lines here.) Hence, evaluated at the point $A \in \s$,
$$
\Omega(b,a_\eta)
=
\frac{1}{(n-1)!}\int_M
b \wedge 
\left(\diff (A(\eta)) + \iota_{p_*\eta} \omega_A\right)
\wedge 
\omega_A^{n-1}.
$$
Next we use the following identity:  on a $2n$-dimensional manifold, given a 1-form $\alpha$ and a 2-form $\beta$ the $(2n+1)$-form $\alpha \wedge \beta^n$ necessarily vanishes; hence, for any vector field $v$,
\[
0 = \iota_v (\alpha \wedge \beta^n) = \alpha(v) \beta^n- n \alpha \wedge \iota_v\beta \wedge \beta^{n-1}.
\]
Putting $\alpha = b$, $\beta = \omega_A$ and $v = p_*\eta$, this gives
\begin{eqnarray*}
\Omega(b,a_\eta)
&=&
\frac{1}{(n-1)!}\int_M \left(
b \wedge \diff (A(\eta)) \wedge \omega_A^{n-1}
+
\frac{1}{n} b(p_*\eta) \omega_A^n
\right),\\
&=&
\int_M\left(
\frac{1}{(n-1)!}A(\eta) \diff b \wedge \omega_A^{n-1} 
+
\frac{1}{n!} b(p_*\eta)\, \omega_A^n
\right),\\
&=&
b \cdot \langle \mu ,\eta \rangle.
\end{eqnarray*}

Finally, $\G$-equivariance follows immediately from the  definition of~$\mu$.
\end{proof}

We remark that this picture is motivated by the well-known observation of Atiyah and Bott \cite{atiyah-bott} that ``curvature is a moment map''.  In \cite{atiyah-bott}, Atiyah and Bott consider unitary connections in bundles of \emph{arbitrary} rank, but over a base with a \emph{fixed} symplecitc form.

To complete the proof of Theorem \ref{main_observation} we show that the components of $\s$ are identified via $\mu$ with coadjoint orbits.

\begin{lemma}\label{transitive}
The map $\mu \colon \s \to \Lie(\G)^*$ embeds each component of $\s$ as a coadjoint orbit of $\G_0$.
\end{lemma}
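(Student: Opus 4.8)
The plan is to deduce the lemma from three facts: each connected component of $\s$ is a single $\G_0$-orbit (transitivity), $\mu$ carries such an orbit onto a coadjoint orbit (equivariance), and $\mu$ is injective. Together these exhibit the component as the coadjoint orbit $\G_0\cdot\mu(A)$.

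First I would prove transitivity by showing the infinitesimal action $\Lie(\G)\to T_A\s=\Omega^1(M,\R)$, $\eta\mapsto a_\eta=\diff(A(\eta))+\iota_{p_*\eta}\omega_A$, is surjective at every $A\in\s$. Indeed, taking $\eta$ to be the $A$-horizontal lift of a vector field $v$ on $M$ gives $a_\eta=\iota_v\omega_A$, and since $\omega_A$ is nondegenerate the assignment $v\mapsto\iota_v\omega_A$ is already an isomorphism onto $\Omega^1(M,\R)$. The inverse of this isomorphism provides an explicit continuous right-inverse to the orbit map: one lifts the flow of $v$ to a path in $\G_0$ via parallel transport, noting that this flow preserves $\s$ since diffeomorphisms send symplectic forms to symplectic forms. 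Hence every $\G_0$-orbit is open, and as the open orbits partition $\s$, each connected component is a single $\G_0$-orbit. By $\G$-equivariance of $\mu$ (Proposition~\ref{moment_map_G}), $\mu$ then maps such a component onto the single coadjoint orbit $\G_0\cdot\mu(A)$.

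The heart of the matter is injectivity of $\mu$, which I would establish directly for any two connections. Writing $A'=A+p^*a$ for the unique $a\in\Omega^1(M,\R)$, so that $A'(\eta)=A(\eta)+a(p_*\eta)$ and $\omega_{A'}=\omega_A+\diff a$, the difference $\langle\mu(A')-\mu(A),\eta\rangle$ equals $\tfrac1{n!}\int_M\bigl[(A(\eta)+a(p_*\eta))\,\omega_{A'}^n-A(\eta)\,\omega_A^n\bigr]$. Testing first against vertical generators $\eta\in\Lie(\T)$, for which $A(\eta)=f$ is an arbitrary function and $p_*\eta=0$, forces $\int_M f(\omega_{A'}^n-\omega_A^n)=0$ for all $f$, hence $\omega_{A'}^n=\omega_A^n$: the two connections carry the same volume form. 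Feeding this back, the difference reduces to $\tfrac1{n!}\int_M a(p_*\eta)\,\omega_{A'}^n$, and as $p_*\eta$ ranges over all vector fields, vanishing for every $\eta$ gives $\int_M a(v)\,\omega_{A'}^n=0$ for all $v$; since $\omega_{A'}^n$ is a volume form this forces $a\equiv0$ pointwise, so $A'=A$.

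With injectivity in hand the stabilizers coincide, $\Stab_{\G_0}(A)=\Stab_{\G_0}(\mu(A))$ (one inclusion is equivariance, the other is injectivity applied to $g\cdot A$), so $\mu$ is a $\G_0$-equivariant bijection between $\G_0/\Stab_{\G_0}(A)$ and $\G_0/\Stab_{\G_0}(\mu(A))$ with identical isotropy, i.e.\ an equivariant diffeomorphism of the component onto the coadjoint orbit. I expect the only genuine subtlety to be the infinite-dimensional bookkeeping in the transitivity step: passing from surjectivity of the infinitesimal action to openness of orbits is not automatic in this Fr\'echet setting, but it is harmless here because $v\mapsto\iota_v\omega_A$ furnishes an honest continuous right-inverse and the lifted flows give a genuine local slice. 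The injectivity computation, by contrast, is elementary once one observes that $\mu$ paired with vertical generators records precisely the volume form $\omega_A^n/n!$.
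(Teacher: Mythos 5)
Your injectivity argument is correct and is a genuine (mild) variant of the paper's: the paper picks a single vector field $v$ whose $A'$-horizontal lift $\eta$ has $A(\eta)\ge 0$ and positive somewhere, so that $\langle\mu(A),\eta\rangle>0=\langle\mu(A'),\eta\rangle$, whereas your two-step version first extracts $\omega_{A'}^n=\omega_A^n$ from the vertical generators and then kills $a$ against horizontal lifts. Both work; yours has the small bonus of isolating the fact that $\mu$ restricted to $\Lie(\T)$ records the volume form, which is exactly the content of the proposition in \S\ref{symplectic_case}. Your formula for the infinitesimal action and its surjectivity via the $\omega_A$-dual vector field is identical to the paper's, as is the final bookkeeping with stabilisers.

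The one genuine gap is the transitivity step. From surjectivity of $\rho_A\colon\Lie(\G)\to T_A\s$ you conclude that ``every $\G_0$-orbit is open,'' but in this Fr\'echet setting that implication is precisely what fails: a continuous right inverse to the \emph{linearised} orbit map does not give local surjectivity of the orbit map itself (there is no inverse function theorem here without tame estimates), and the ``genuine local slice'' you invoke is asserted rather than constructed. Moreover the static flow you describe does not realise an arbitrary nearby connection: as you flow, $A$ and hence $\omega_A$ change, so the vector field needed to produce a fixed increment $a$ must be updated in time. The paper sidesteps all of this with a Moser-type argument requiring no inverse function theorem: given a path $A(t)$ in a component of $\s$ (components are path components, $\s$ being open in an affine space), let $v(t)$ be the $\omega_{A(t)}$-dual of $\frac{\diff A}{\diff t}(t)$ and $\eta(t)$ its $A(t)$-horizontal lift; the time-dependent $S^1$-invariant vector field $\eta(t)$ on $P$ integrates, as an ODE on a compact manifold, to a path $g(t)$ in $\G_0$ with $g(t)\cdot A(0)=A(t)$. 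You have every ingredient for this repair --- your construction is the $t$-frozen special case --- so the fix is simply to replace the openness claim by integration along an arbitrary path.
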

\begin{proof}
We must show two things: firstly, that $\mu$ is injective; secondly that $\G_0$ acts transitively on the components of $\s$.

To prove injectivity of $\mu$, suppose that $A \neq A'$. Then we can find a vector field $v$ on $M$ such that the $A'$-horizontal lift $\eta$ of $v$ satisfies $A(\eta)>0$, hence $\langle \mu(A), \eta \rangle >0$. But $A'(\eta) = 0$ and so $\langle \mu(A'), \eta \rangle =0$, hence $\mu(A) \neq \mu(A')$.

Next we show that $\G_0$ acts transitively on the components of $\s$. Given $A\in \s$, let $\rho_A \colon \Lie(\G) \to T_A\s$ denote the infinitesimal action of $\G$ at $A$. We have already seen that
\[
\rho_A(\eta)
=
a_\eta
=
\diff(A(\eta))+ \iota_{p_*\eta}\omega_A.
\]
First we show that $\rho_A$ is surjective. Given $a \in \Omega^1(M, \R)$, let $v$ be the $\omega_A$-dual vector field and let $\eta$ be the $A$-horizontal lift of $v$ to $P$. Then $\rho_A(\eta) = a$.

Now, given a path $A(t)$ in $\s$, let $v(t)$ be the vector field which is $\omega_{A(t)}$-dual to $\frac{\diff A}{\diff t}(t)$ and let $\eta(t)$ be the $A(t)$-horizontal lift of $v(t)$ to $P$. The time-dependent vector field $\eta(t)$ integrates up to a path $g(t)$ in $\G_0$ with $g(0)$ the identity. By construction, $g(t) \cdot A(0) = A(t)$. 
\end{proof}

\subsection{Integrality and the Weinstein homomorphism}\label{integrality}

We next turn to the question of whether or not the orbits of $\s$ are \emph{integral} coadjoint orbits. It turns out that the obstruction to this is a homomorphism $\pi_1(\Ham_A) \to S^1$, first introduced by Weinstein \cite{weinstein}.

We briefly recall the definition of an integral coadjoint orbit. For more details see, for example, \cite{kirillov}. Given a Lie group $G$ with Lie algebra $\g$, fix $f \in \g^*$. We write $\Stab(f) \subset G$ for the stabiliser of $f$ under the coadjoint action and $\mathfrak{h}$ for the Lie algebra of the stabiliser. The linear map $f \colon \g \to \R$ restricts to a Lie algebra homomorphism $f \colon \mathfrak h \to \R$. The orbit $\mathcal O_f$ of $f$ is called \emph{integral} when the map $\mathfrak h \to \R$ is (up to a factor of $i$) the derivative of a group homomorphism $\Stab(f) \to S^1$. This condition implies the existence of a line bundle $L \to \mathcal O_f$ which carries a connection whose curvature is the symplectic form on $\mathcal O_f$; moreover the symplectic action of $G$ on $\mathcal O_f$ lifts to a connection-preserving action on $L$. 

Accordingly, we next investigate the stabiliser $\Stab_A \subset \G_0$ of a point $A \in \s$. For an alternative exposition of the following, see Weinstein's article \cite{weinstein}.

We start from the a short exact sequence
\[
1 \to \Map_0(M, S^1) \to \G_0 \to \Diff_0(M) \to 1
\]
(where the subscripts $0$ denote the identity components.) 

\begin{lemma}[Weinstein \cite{weinstein}]
Restricting this sequence to $\Stab_A \subset \G_0$ gives a short exact sequence
\begin{equation}\label{ses_for_stab}
1 \to S^1 \to \Stab_A \to \Ham_A \to 1
\end{equation}
where $S^1 \subset \Map_0(M, S^1)$ are the constant gauge transformations. 
\end{lemma}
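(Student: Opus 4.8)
The plan is to understand the stabiliser $\Stab_A$ by tracking a bundle isometry $g \in \G_0$ through the short exact sequence $1 \to \Map_0(M,S^1) \to \G_0 \to \Diff_0(M) \to 1$. Such a $g$ covers some diffeomorphism $\phi \in \Diff_0(M)$, and the condition $g \cdot A = A$ (i.e.\ $g^*A = A$ as $1$-forms on $P$) should translate into two pieces of information: a condition on $\phi$, and then, once $\phi$ is fixed, a constraint on which lifts $g$ of $\phi$ preserve $A$. First I would identify the image of $\Stab_A$ in $\Diff_0(M)$. Since $g$ preserves the connection $1$-form $A$ on $P$, it preserves its curvature, so $\phi$ must satisfy $\phi^*\omega_A = \omega_A$; that is, $\phi$ is a symplectomorphism lying in $\Diff_0(M)$. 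I claim the image is exactly $\Ham_A$, the group of Hamiltonian diffeomorphisms, and this is where the real work lies (see below).

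Next I would identify the kernel of the map $\Stab_A \to \Diff_0(M)$ restricted to the stabiliser. An element of this kernel is an $A$-preserving bundle isometry covering the identity, i.e.\ an element $u \in \Map_0(M,S^1)$ with $u \cdot A = A$. Writing the gauge action of $u = e^{2\pi i f}$ (for $f\colon M \to \R$, using the real-valued conventions of the previous section) on the connection as $A \mapsto A - p^*(\diff f)$, the condition $u \cdot A = A$ forces $\diff f = 0$, hence $f$ constant since $M$ is connected. Thus the kernel consists precisely of the constant gauge transformations, giving the left-hand $S^1$ in the claimed sequence. This step is essentially the statement that a connection has no infinitesimal or finite gauge symmetry beyond the constants on a line bundle over a connected base, and it is routine.

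The substantive step is surjectivity onto $\Ham_A$, together with the matching inclusion that every lift realising such a $\phi$ does exist in $\Stab_A$. The key point is the standard $1$-to-$1$ correspondence, in the presence of a prequantum line bundle $(L,A)$ with curvature $\omega_A$, between connection-preserving bundle automorphisms covering $\phi$ and the condition that $\phi$ be \emph{quantizable}. Concretely, given $\phi \in \Diff_0(M)$ with $\phi^*\omega_A = \omega_A$, a lift $g$ preserving $A$ exists iff $\phi^*A - A$ is exact, i.e.\ $\phi^*A - A = p^*(\diff h)$ for some $h \in C^\infty(M,\R)$; one then sets $g$ to be the composite of the natural lift with the gauge transformation $e^{2\pi i h}$. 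Running this over a path $\phi_t$ from the identity, the infinitesimal version reads $\mathcal L_{X_t} A = \diff(A(\tilde X_t))$ on $P$, which by the Cartan formula computation already performed in the proof of Proposition \ref{moment_map_G} is equivalent to $\iota_{X_t}\omega_A$ being exact for all $t$ — precisely the defining condition for $\phi_t$ to be a Hamiltonian isotopy. Thus the image in $\Diff_0(M)$ is exactly $\Ham_A$.

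The main obstacle will be handling the path-dependence and the exactness-versus-closedness distinction carefully: $\phi^*\omega_A = \omega_A$ only makes $\iota_{X_t}\omega_A$ closed, and one must use that $\phi$ lies in the identity component and arises from an isotopy to upgrade this to the exactness (Hamiltonian) condition, rather than merely the symplectic one. I would be careful to phrase the argument so that it is the \emph{flux} of the isotopy that measures the failure, and to note that preservation of $A$ itself (not just $\omega_A$) is what pins the image down to $\Ham_A$ rather than the full group of symplectomorphisms in $\Diff_0(M)$. Finally, I would assemble the kernel computation and the image computation into the exact sequence \eqref{ses_for_stab}, checking that the $S^1$ of constant gauge transformations indeed sits inside $\Stab_A$ (it fixes every connection) and that the quotient is $\Ham_A$.
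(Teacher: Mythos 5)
Your proposal follows essentially the same route as the paper: both identify the kernel with the constant gauge transformations via the formula $f\,\diff(f^{-1})$ for the gauge action, and both pin the image down to $\Ham_A$ using the Cartan-formula computation $\mathcal{L}_\eta A = \diff(A(\eta)) + \iota_{p_*\eta}\omega_A$ from the proof of Proposition \ref{moment_map_G}, realising surjectivity by lifting a Hamiltonian vector field with Hamiltonian $h$ to $v^\flat - h\,\del/\del\theta$ and integrating. The only substantive difference is cosmetic packaging of the surjectivity step as a group-level prequantisation lifting criterion (where, strictly, an $A$-preserving lift of $\phi$ exists under integrality, not exactness, of the relevant class --- but you only invoke the exact case, so nothing is lost), and both you and the paper leave implicit the same point, namely that an element of $\Stab_A$ is reached by a path \emph{within} $\Stab_A$ so that the infinitesimal characterisation applies.
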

\begin{proof}
First note that the restriction of the map $\G_0 \to \Diff(M)$ to $\Stab_A$ certainly takes values in $\omega_A$-symplectomorphisms. To verify that the image lies in $\Ham_A$, recall the formula for the infinitesimal action $\rho_A(\eta)$ of $\eta \in \Lie(\G)$ at $A$ given above. From this it follows that $\eta \in \Lie (\Stab_A)$ if and only if $p_*\eta$ is a Hamiltonian vector field with Hamiltonian $-A(\eta)$. 

Next we check that the map $\pi \colon \Stab_A \to \Ham_A$ is surjective. Given a $\omega_A$-Hamiltonian vector field $v$ on $M$ with Hamiltonian $h$ we write $v^\flat$ for the $A$-horizontal lift of $v$. Then the vector field $\eta = v^\flat - h \frac{\del}{\del\!\theta}$ on $P$ is $S^1$-invariant, hence in $\Lie(\G)$ and $\rho_A(\eta) = 0$. So $\eta \in \Lie(\Stab_A)$ and $\pi_*(\eta) = v$, meaning $\pi_*$ is surjective. Integrating this shows that $\pi \colon \Stab_A \to \Ham_A$ is  surjective.

The kernel of $\pi$ is $\Stab_A \cap \Map_0(M, S^1)$. Given $f \colon M \to S^1$, the corresponding change in $A$ is $f \diff( f^{-1})$.  Hence $\ker \pi =S^1$ is the constants, and the short exact sequence for $\G_0$ restricts to $\Stab_A$ as claimed.
\end{proof}

Given $A \in \s$ the moment map at $A$ restricts to give a Lie algebra homomorphism
\[
\mu(A) \colon \Lie(\Stab_A) \to \R
\]
The kernel of this map is an ideal $I \subset \Lie(\Stab_A)$; moreover, the inclusion $S^1 \subset \Stab_A$ determines a copy of $\R \subset \Lie(\Stab_A)$ which is mapped isomorphically onto $\R$ by $\mu$. It follows that the derivative of $\Stab_A \to \Ham(A)$ identifies $I \cong \HVect_A$ and so there is a splitting
\begin{equation}
\label{lie_alg_splitting}
\Lie(\Stab_A) \cong \R \oplus \HVect_A
\end{equation}
into a direct sum of ideals.

Using left-multiplication we can view the splitting (\ref{lie_alg_splitting}) as defining a connection on the principle $S^1$-bundle $\Stab_A \to \Ham_A$. Because the horizontal subspace (the $\HVect_A$ summand) is a Lie sub-algebra of $\Lie(\Stab_A)$, this connection is flat. Its holonomy is the \emph{Weinstein homomorphism},
\[
w \colon \pi_1(\Ham_A) \to S^1.
\]

\begin{proposition}
Given $A \in \s$, the corresponding coadjoint orbit of $\G_0$ is integral if and only if the Weinstein homomorphism $w \colon \pi_1(\Ham_A) \to S^1$ is trivial.
\end{proposition}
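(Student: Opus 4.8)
The plan is to unwind the definition of integrality for the point $f = \mu(A) \in \Lie(\G)^*$ and match it directly with the triviality of the holonomy $w$. Since $\mu$ is $\G$-equivariant and injective (Lemma \ref{transitive}), the stabiliser of $f$ under the coadjoint action coincides with $\Stab_A$, so integrality of the orbit amounts to asking whether the Lie algebra homomorphism $\mu(A) \colon \Lie(\Stab_A) \to \R$ is, up to the factor of $i$, the derivative of a group homomorphism $\chi \colon \Stab_A \to S^1$. Using the splitting (\ref{lie_alg_splitting}), $\mu(A)$ is precisely the projection $\R \oplus \HVect_A \to \R$; so such a $\chi$ is required to restrict to the identity on the central $S^1 \subset \Stab_A$ and to have $\HVect_A$ in the kernel of its derivative.

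First I would observe that the existence of such a $\chi$ is equivalent to a compatible splitting of the central extension (\ref{ses_for_stab}). Indeed, given $\chi$, the subgroup $H = \ker \chi$ has Lie algebra $\HVect_A$ and meets the central $S^1$ trivially; since $\HVect_A$ maps isomorphically to $\Lie(\Ham_A)$ and $\Ham_A$ is connected, the projection restricts to an isomorphism $H \to \Ham_A$, exhibiting a group-theoretic section of (\ref{ses_for_stab}) whose differential is the horizontal subspace $\HVect_A$. Conversely, any such section $s \colon \Ham_A \to \Stab_A$ makes $\Stab_A \cong S^1 \times s(\Ham_A)$ a direct product (as $S^1$ is central), and projection onto the $S^1$ factor supplies a $\chi$ with $d\chi = \mu(A)$.

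Next I would identify this splitting with a global flat section of the connection defining $w$. The horizontal distribution integrates the subalgebra $\HVect_A$, so by Frobenius its leaf through the identity is the integral subgroup $H \subset \Stab_A$ with $\Lie(H) = \HVect_A$. A section $s$ as above is exactly a global section tangent to the horizontal distribution, i.e.\ a flat section; its image is the leaf $H$, and $H \to \Ham_A$ is an isomorphism precisely when the horizontal lift of every loop in $\Ham_A$ closes up. This closing-up condition is by definition the vanishing of the holonomy $w$. Assembling the equivalences---integrality $\iff$ existence of $\chi$ $\iff$ compatible splitting of (\ref{ses_for_stab}) $\iff$ global flat section $\iff$ $w$ trivial---yields the proposition.

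The main obstacle will be this third step: relating the infinitesimal splitting to the global holonomy. Here $H$ is a priori only an immersed (possibly non-closed) subgroup, and one must verify that it forms a global section of $\Stab_A \to \Ham_A$ exactly when the horizontal lift of every loop closes up, equivalently when $w$ vanishes. The subalgebra property of $\HVect_A$, already used to establish flatness of the connection, is precisely what guarantees that $H$ is a subgroup, so that a global flat section, when it exists, is automatically the group homomorphism $s$ required above.
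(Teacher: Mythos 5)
Your proof is correct and follows essentially the same route as the paper: both reduce integrality to the question of whether the horizontal subalgebra $\HVect_A = \ker\mu(A)$ integrates to a global group-theoretic section of $\Stab_A \to \Ham_A$, which happens precisely when the holonomy $w$ of the flat connection vanishes. You simply spell out the intermediate equivalences (character $\chi$ $\iff$ splitting of the central extension $\iff$ flat section) that the paper compresses into a couple of lines.
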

\begin{proof}
The coadjoint orbit of $A$ is integral precisely when the kernel of the homomorphism $\mu(A) \colon \Lie(\Stab_A) \to \R$ integrates up to a subgroup of $\Stab_A$. In our case this kernel defines the horizontal space of the flat connection whose holonomy is $w$. So the orbit is integral if and only if parallel transport identifies all the $S^1$-fibres of $\Stab_A \to \Ham_A$. This happens precisely when the holonomy is trival.
\end{proof}

On the one hand, there are examples of symplectic manifolds for which the Weinstein homomorphism is trivial. Indeed, for a surface of genus at least one, the Hamiltonian group is even contractible. On the other hand, there are also plenty of manfiolds for which the Weinstein homomorphism is non-trivial; the simplest being $S^2$. To see this, restrict the short exact sequence (\ref{ses_for_stab}) to the subgroup $\SO(3) \subset \Ham$ to obtain the sequence
\[
1\to S^1 \to \U(2) \to \SO(3) \to 1.
\]
The flat connection corresponds to the Lie algebra isomorphism $\u(2) \cong \su(2)\oplus i\R$; its holonomy is non-trivial and gives the standard isomorphism
\[
\U(2) \cong \SU(2) \times_{\pm1} S^1.
\]
Similiar remarks apply to $\C\P^n$ with the Fubini--Study metric and, more generally to certain toric varieties. See the recent survey article of McDuff \cite{mcduff} for more on this subject.

\subsection{Further questions}

Given a subgroup $\mathcal H \subset \Diff_0(M)$, the preimage under $\G_0 \to \Diff_0(M)$ is a subgroup $\mathcal H' \subseteq \G_0$ which inherits a Hamiltonian action on $\s$. The moment-map $\mu'$ for the action of $\mathcal H'$ is simply the projection of $\mu$ under $\Lie (\G_0)^* \to \Lie (\mathcal H')^*$. One might look for zeros of $\mu'$ in the hope that they give symplectic structures which respect in some way the additional geometry imposed in passing from $\Diff_0(M)$ to $\mathcal H$. 

We explore this idea in the next section in its most extreme form, when $\mathcal H = 1$ is the trivial group. This leads to the problem of prescribing the volume form of a symplectic structure. In a forthcoming paper \cite{fine-asdE} we exploit this same idea for certain manifolds $M$ and subgroups $\mathcal H$. The manifolds in question are $S^2$-bundles over four-manifolds and in this way we give a moment-map interpretation of the anti-self-dual Einstein equations for a Riemannian metric on a four-manifold. Besides these two situations, however, there are many other possibilities one could study and it would be interesting to see more examples. 

We close this section with a speculative remark. The above picture associates to each isotopy class of symplectic forms in $c_1(L)$ a certain coadjoint orbit of $\G_0$. On the one hand, distinguishing isotopy classes of symplectic forms is a central problem in symplectic topology; on the other hand, distinguishing coadjoint orbits is a central problem in the theory of infinte dimensional Lie groups. One might hope that Theorem \ref{main_observation} opens up the path for a transfer of ideas between these two as yet poorly understood questions. 

An important approach to the study of coadjoint orbits is the celebrated ``orbit method'' (see for example the text of Kirillov \cite{kirillov}). For the group $\G_0$, perhaps the first case to consider would be a surface of genus at least one. There, the corresponding coadjoint orbit is integral. Moreover, as we will see in the following section, it comes with a natural isotropic fibration whose infinite-dimen\-sional fibres fail to be coisotropic by a finite dimensional discrepancy (see Remark \ref{lag_fib_remark}). Thus we have in place more-or-less the initial data required by geometric quantisation. This still leaves, of course, the principal difficulty of what should play the rôle of the ``square-integrable sections'' of the prequantum line bundle, since the base is infinite dimensional. Exactly how to quantise such a coadjoint orbit is, in my opinion at least, an interesting and difficult question.

\section{Prescribing the volume form of a symplectic structure}\label{prescribe_volume}

Given a Hamiltonian action of a group $G$ with a moment map $\mu$ taking values in $\g^*$, the action of a sub-group $H \subset G$ has moment map given by composing $\mu$ with the projection $\g^* \to \mathfrak{h}^*$. In this section we apply this observation to the action of the subgroup $\T \subset \G$ of bundle isometries of $L \to M$ which cover the identity. 

\subsection{Purely symplectic case}\label{symplectic_case}

Of course, $\T = \Map(M, S^1)$ and so $\Lie(\T) = C^\infty(X, \R)$. (One normally uses imaginary valued functions here but again we have multiplied by $-2\pi i$ throughout.) By integrating against top-degree forms, we can identify $\Omega^{2n}(M, \R)$ with a subset of $\Lie(\T)^*$. With this understood, we have the following result, which is an immediate corollary of Proposition \ref{moment_map_G}.

\begin{proposition}
There is an equivariant moment map $\nu \colon \s \to \Lie(\T)^*$ for the action of $\T$ on $\s$ given by $\nu(A) =\omega_A^n/n!$
\end{proposition}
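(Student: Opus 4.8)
The plan is to read this off directly from Proposition \ref{moment_map_G} using the restriction principle recalled at the start of this section: since $\T \subset \G$ is a subgroup, the $\T$-action on $\s$ is automatically Hamiltonian, with moment map $\nu = \iota^* \circ \mu$, where $\iota^* \colon \Lie(\G)^* \to \Lie(\T)^*$ is the transpose of the inclusion $\Lie(\T) \hookrightarrow \Lie(\G)$. Everything then reduces to evaluating the formula of Proposition \ref{moment_map_G} on elements $\eta \in \Lie(\T)$ and interpreting the result.

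First I would identify how $\Lie(\T) = C^\infty(M, \R)$ sits inside $\Lie(\G)$. An element $\eta \in \Lie(\T)$ covers the zero vector field on $M$, so as an $S^1$-invariant vector field on $P$ it is \emph{vertical}; concretely it is $f\,\del/\del\theta$ for the corresponding function $f \in C^\infty(M,\R)$. Two consequences are immediate: $p_*\eta = 0$, and $A(\eta) = f$, since $A$ pairs to $1$ with the generator of the fibre rotation.

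Substituting into Proposition \ref{moment_map_G} then gives
\[
\langle \nu(A), f\rangle = \langle \mu(A), \eta\rangle = \frac{1}{n!}\int_M A(\eta)\,\omega_A^n = \int_M f\,\frac{\omega_A^n}{n!}.
\]
Under the identification of $\Omega^{2n}(M,\R)$ with a subset of $\Lie(\T)^*$ by integration against functions, the right-hand side is exactly the pairing of $f$ with the top-degree form $\omega_A^n/n!$, so $\nu(A) = \omega_A^n/n!$ as claimed. It then remains to confirm equivariance. Because $\T = \Map(M, S^1)$ is abelian, its coadjoint action on $\Lie(\T)^*$ is trivial, so equivariance reduces to $\T$-invariance of $\nu$. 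This holds because a gauge transformation covering the identity changes $A$ by a closed form (namely $f\,\diff(f^{-1})$ in the notation used earlier), hence leaves the curvature $F_A$, and therefore $\omega_A^n/n!$, unchanged.

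Since the statement is explicitly a corollary, I expect no serious obstacle; the only point requiring care is the bookkeeping, that is, correctly recognising that elements of $\Lie(\T)$ are purely vertical, so that the $p_*\eta$ term drops out and $A(\eta)$ is literally the function $f$, and then matching the resulting integral pairing to the stated identification $\Omega^{2n}(M,\R) \subset \Lie(\T)^*$.
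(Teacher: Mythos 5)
Your proposal is correct and follows exactly the route the paper intends: the paper gives no written proof, calling the statement an immediate corollary of Proposition \ref{moment_map_G} via the restriction of a moment map to a subgroup, and your computation (vertical $\eta$ gives $p_*\eta = 0$ and $A(\eta) = f$, so the pairing becomes $\int_M f\,\omega_A^n/n!$, with equivariance trivial since $\T$ is abelian and gauge transformations covering the identity preserve curvature) is precisely the bookkeeping being left to the reader.
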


So prescribing the volume of a symplectic structure in $c_1(L)$ can be seen as finding a zero of a moment map. More precisely, since $\T$ is abelian, the coadjoint action is trivial and so we can equally use $\nu - \theta$ as a moment map for any $\theta \in \Lie(\T)^*$. Given a volume form $\theta \in \Omega^{2n}(M, \R)$ with $[\theta] = \frac{1}{n!} c_1(L)^n$, the equation for $A \in \s$ given by $\omega_A^n/n! = \theta$ is the same as finding a zero of the moment map~$\nu - \theta$.

Given such a $\theta$, we next turn to the symplectic reduction $\nu^{-1}(\theta)/\T$. By standard theory this is a symplectic manifold (of infinite dimension). To describe it we write $\X_\theta$ for the space of symplectic forms $\omega \in c_1(L)$ with $\omega^n/n! = \theta$.

\begin{proposition}\label{fixed_volume_form}
If $b_1(M)= 0$ then $\X_\theta = \nu^{-1}(\theta)/\T$ and so, in particular, the space of symplectic forms with fixed volume form is naturally a symplectic manifold. In general there is a submersion from the symplectic reduction $\nu^{-1}(\theta)/\T \to \X_\theta$ with fibres isomorphic to $H^1(M,\R)/H^1(M,\Z)$. The restriction of the symplectic structure to these fibres is identified with the 2-form on $H^1(M,\R)$ defined by $(\alpha, \beta) \mapsto \frac{1}{(n-1)!}\int_M \alpha \wedge \beta \wedge c_1(L)^{n-1}$.
\end{proposition}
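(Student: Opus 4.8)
The plan is to exploit the curvature map $A \mapsto \omega_A$. Since curvature is gauge-invariant and $\omega_A$ always represents $c_1(L)$, this map is constant on $\T$-orbits and sends $\nu^{-1}(\theta)$ into $\X_\theta$ (the condition $\omega_A^n/n! = \theta$ is precisely $\omega_A \in \X_\theta$). It therefore descends to a map $\Phi \colon \nu^{-1}(\theta)/\T \to \X_\theta$. First I would check surjectivity: given $\omega \in \X_\theta \subset c_1(L)$, fix any $A_0 \in \s$; then $\omega - \omega_{A_0} = \diff\beta$ is exact and $A_0 + \beta$ has curvature $\omega$. Differentiating $\Phi$ sends a tangent vector represented by $a$ to $\diff a$, so $\Phi$ is a submersion whose vertical tangent space is $\{a : \diff a = 0\}$ modulo exact forms, that is $H^1(M,\R)$.

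Next I would identify the fibres set-theoretically. For fixed $\omega$, the set $\{A : \omega_A = \omega\}$ is a torsor over the closed $1$-forms $Z^1(M,\R)$, since two connections with equal curvature differ by a closed form, and the whole of $\T$ acts within this set. The identity component $\Map_0(M, S^1) = \{e^{i\phi}\}$ acts by exact forms, so the partial quotient is $Z^1(M,\R)/B^1(M,\R) = H^1(M,\R)$; the component group $\pi_0(\T) = [M, S^1] = H^1(M, \Z)$ then acts by the image of the integral lattice in $H^1(M,\R)$. Hence $\Phi^{-1}(\omega) \cong H^1(M,\R)/H^1(M,\Z)$, a torus of dimension $b_1(M)$. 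The constant $S^1 \subset \T$ acts trivially on every connection and is simply absorbed, while translation by a nonzero lattice vector is free, so the quotient is a smooth torus. When $b_1(M) = 0$ this torus is a point, $\Phi$ is a bijective submersion and hence a diffeomorphism, so $\X_\theta$ inherits the reduced symplectic structure; this gives the first assertion.

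Finally I would compute the reduced form along a fibre. By Marsden--Weinstein the reduced form $\Omega_{\mathrm{red}}$ satisfies $\pi^*\Omega_{\mathrm{red}} = \iota^*\Omega$, where $\iota \colon \nu^{-1}(\theta) \hookrightarrow \s$ and $\pi$ is the quotient projection. Evaluating on vertical vectors, lifted to closed $1$-forms $a, b$, gives $\Omega_{\mathrm{red}}([a],[b]) = \frac{1}{(n-1)!}\int_M a \wedge b \wedge \omega^{n-1}$. A one-line Stokes argument, using $\diff f \wedge b \wedge \omega^{n-1} = \diff(f\, b \wedge \omega^{n-1})$ when $b$ and $\omega$ are closed, shows this is unchanged under $a \mapsto a + \diff f$, so it descends to classes $\alpha = [a],\ \beta = [b] \in H^1(M,\R)$ and, since $[\omega]^{n-1} = c_1(L)^{n-1}$, equals $\frac{1}{(n-1)!}\int_M \alpha \wedge \beta \wedge c_1(L)^{n-1}$, as claimed.

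The step I expect to be the main obstacle is the precise identification of the fibre with $H^1(M,\R)/H^1(M,\Z)$: one must verify that the component group $\pi_0(\T)$ acts exactly through the integral lattice (with the chosen $-2\pi i$ normalisation absorbing the factor of $2\pi$) and that the combined action is free enough to yield a genuine smooth torus quotient. The symplectic computation is then a clean cohomological calculation. I would also acknowledge the infinite-dimensional bookkeeping --- the implicit function theorem in the relevant Fr\'echet setting and properness of the action --- though the proposition already grants, by standard reduction theory, that $\nu^{-1}(\theta)/\T$ is a symplectic manifold.
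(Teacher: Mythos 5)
Your proposal is correct and follows essentially the same route as the paper: both arguments fibre the map $\nu^{-1}(\theta)\to\X_\theta$ given by $A\mapsto\omega_A$, identify each fibre $\s_\omega/\T$ with $H^1(M,\R)/H^1(M,\Z)$ by letting the identity component of $\T$ account for exact forms and the component group $\pi_0(\T)\cong H^1(M,\Z)$ for the integral lattice, and then read off the restricted $2$-form from the definition of $\Omega$. The only differences are cosmetic --- you spell out the surjectivity and submersion checks and the Stokes argument for well-definedness on cohomology, which the paper leaves implicit.
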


\begin{proof}
We begin with the following standard fact. Given a symplectic form $\omega \in c_1(L)$, write $\s_\omega\subset \s$ for the set of unitary connections $A$ for which $\omega_A = \omega$. Then $\s_\omega/\T$ can be identified with $H^1(M, \R)/H^1(M,\Z)$. 

More precisely, given $A_0 \in \s_\omega$, any other connection $A \in \s_\omega$ is of the form $A = A_0 + \frac{i}{2\pi}a$ for a closed 1-form $a$. There is thus a surjection $c \colon \s_\omega \to H^1(M,\R)$ given by $c(A)= [a]$. Now $\T = \Map(M, S^1)$ acts on $H^1(M, \R)$, the action of $f \in \T$ on $H^1(M, \R)$ is by addition of $\frac{1}{2\pi i}[f \diff(f^{-1})] \in H^1(M, \Z)$. With this action understood, $c$ is $\T$-equivariant. Since any element of $H^1(M, \Z)$ can be written in as $\frac{1}{2\pi i}[f \diff(f^{-1})]$ for some $f \in \T$, the map $c$ descends to an identification $\s_\omega/\T \to H^1(M,\R)/H^1(M,\Z)$. 

The group $\T$ is abelian, so its orbits in $\s$ are isotropic and hence the restriction of the symplectic form $\Omega$ on $\s$ to $\s_\omega$ descends to a 2-form on $\s_\omega/\T \cong H^1(M, \R)/H^1(M, \Z)$. It follows from the definition of $\Omega$ that the 2-form is identified with the 2-form on $H^1(M, \R)$ given by
\[
(\alpha, \beta) \mapsto \frac{1}{(n-1)!}\int_M \alpha \wedge \beta \wedge c_1(L)^{n-1}.
\]

The result follows from these two observations applied fibrewise to the map $\nu^{-1}(\theta) \to \X_\theta$ which sends each connection $A$ to its curvature $\omega_A$.
\end{proof}

\begin{remark}
When $b_1(M) = 0$, the symplectic structure on $\X_\theta$ can be seen directly (and with no need for the condition that the fixed choice of symplectic class be integral). The tangent space at a point $\omega \in \X_\theta$ is the space of exact 2-forms $\gamma$ such that $\omega^{n-1} \wedge \gamma = 0$. We now define a skew pairing $\Theta$ on $T_\omega \X_\theta$ by
\[
\Theta(\gamma, \gamma') 
=
\frac{1}{(n-1)!}
\int_M a \wedge a' \wedge \omega^{n-1}
\]
where $a, a'$ are 1-forms with $\diff a= \gamma$, $\diff a' = \gamma'$. If $\tilde a$ is another 1-form with $\diff \tilde a = \gamma$, then $\diff(a - \tilde a) = 0$ and so, since $b_1(M) = 0$, we can write $a - \tilde a = \diff f$ for some function $f$. Hence,
\[
\int_M (a - \tilde a) \wedge a' \wedge \omega^{n-1}
=
-\int_M f \diff a' \wedge \omega^{n-1}
\]
which vanishes since $\diff a' \wedge \omega^{n-1} = \gamma' \wedge \omega^{n-1} = 0$. It follows that $\Theta(\gamma, \gamma')$ does not depend on the choice of $a$ or $a'$.

When the fixed symplectic class $[\omega] = c_1(L)$ is integral, $\Theta$ is precisely the 2-form which arises from the identification $\nu^{-1}(\theta)/\T \cong \X_\theta$. It follows from the general theory that $\Theta$ is closed and non-degenerate, something which one can verify directly from the definition.
\end{remark}

\begin{remark}
Still under the assumption that $b_1(M) = 0$, note that the group $\Diff(M, \theta)$ of volume-pre\-serving diffeomorphisms acts on the symplectic manifold $\X_\theta$. This action is Hamiltonian in the sense that the infinitesimal action of a single divergence-free vector field $u$ is a Hamiltonian vector field on $\X_\theta$. To define a Hamiltonian $h \colon \X_\theta \to \R$ for the action of $u$ note that $L_u \theta = 0$ so $\iota_u \theta$ is a closed $(2n-1)$-form. Since $b_1(M)= b_{2n-1}(M)=0$ we can write $\iota_u \theta = \diff \beta$ for some $(2n-2)$-form $\beta$. We define the function $h$ by
\[
h(\omega) = - \int_M \beta \wedge \omega.
\]
Given a tangent vector $\gamma \in T_\omega \X_\theta$, i.e., an exact 2-form $\gamma = \diff a$ with $\gamma \wedge \omega^{n-1} = 0$, then the corresponding infinitesimal change in $h$ is given by
\[
\gamma \cdot h 
= 
- \int_M \beta \wedge \gamma
=
- \int_M \beta \wedge \diff a
=
\int_M \iota_u \theta \wedge a.
\]
On the other hand, the infinitesimal action of $u$ at $\omega \in \X_\theta$ is $\gamma_u= \diff (\iota_u \omega)$ and so
\[
\Theta(\gamma_u , \gamma) 
= 
\frac{1}{(n-1)!}\int_M \iota_u \omega \wedge a \wedge \omega^{n-1}
=
\int \iota_u \theta \wedge a
\]
Hence $h$ is a Hamiltonian for the action of $u$. 

Of course the Hamiltonian $h$ is uniquely determined only up to the addition of a constant. This is reflected in our description of $h$ by the freedom in the choice of $\beta$;  adding a closed $(2n-2)$-form to $\beta$ does not alter $\diff \beta = \iota_u\theta$ but changes $h$ by a constant. Writing down a moment map for the action amounts to choosing these constants consistently. The choices involved suggest that this cannot be done in such a way as to give an \emph{equivariant} moment map.
\end{remark}

\begin{remark}\label{lag_fib_remark}
As mentioned above, $\T$ is abelian and so the $\T$-orbits in $\s$ are isotropic. It follows from the standard theory of symplectic reduction that the fibres of the moment map $\nu$ are coisotropic and, moreover, given $A \in \s$, the tangent space to the fibre of $\nu$ through $A$ is the symplectic complement of the tangent space to the $\T$-orbit through $A$. When $M$ is a \emph{surface}, the isotropic fibration of $\s$ given by the $\T$-orbits is close to being a Lagrangian fibration. To see this, note that for a surface a volume forms and symplectic forms are the same thing. Now in the proof of Proposition \ref{fixed_volume_form} we saw that the codimension of $\T \cdot A$ in $\nu^{-1}(\omega_A)$ is $b_1(M)$. So for $S^2$ the $\T$-orbits give a Lagrangian fibration of $\s$, whilst for higher genus surfaces this infinite dimensional isotropic fibration fails to be Lagrangian only by a finite dimensional discrepancy. 
\end{remark}

\subsection{The Kähler case}\label{Kahler_case}

This point of view has additional use when $M$ is a complex manifold. Recall that the Calabi conjecture (now, of course, Yau's theorem \cite{yau:otrcoackmatcmae1}) states that given a Kähler class $\kappa \in H^2(M, \R)$ and volume-form $\theta$ on $M$ with total volume $\frac{1}{n!} \int_M\kappa^n$ there is a unique Kähler metric $\omega \in \kappa$ with $\omega^n/n! = \theta$. At least when $\kappa = c_1(L)$ is the first Chern class of a holomorphic line bundle, we can reformulate this problem as the search for the zero of a moment map in a complex group orbit, in a manner analogous to the Hitchin--Kobayashi correspondence \cite{donaldson:asdymcocasasvb,uhlenbeck.yau:oteohymcisvb} or the Donaldson--Tian--Yau conjecture concerning existence of constant scalar curvature Kähler metrics (as outlined in, for example, \cite{donaldson:scasotv}).

To describe this we first restrict attention to the subspace $\s^{1,1} \subset \s$ of unitary connections in $L \to M$ whose curvature is a positive $(1,1)$-form on the complex manifold $M$. The complex structure $J$ on $M$ makes $\s^{1,1}$ into a Kähler manifold. To see this notice that the endomorphism $a \mapsto Ja$ of $\Omega^1(M, \R)$ makes $\s$ into an almost complex manifold. Given $A \in \s^{1,1}$, $a \in T_A \s^{1,1}$ if and only if $\delb(a^{0,1}) = 0$. Since $(Ja)^{0,1} = - i a^{0,1}$ it follows that $\s^{1,1}$ is an almost complex submanifold of $\s$. 

To show that this almost complex structure is integrable we use the standard identification of the space of unitary connections having curvature of type $(1,1)$ with the space  of holomorphic structures on the line bundle $L \to M$. The identification sends a unitary connection $A$ to the $\delb$-operator given by the $(0,1)$-component of $A$. The $\delb$-operator is integrable precisely because $\omega_A$ is $(1,1)$. A unitary connection is determined by its $(0,1)$-component and, conversely, every integrable $\delb$-operator can be completed in a unique way to a unitary connection with $(1,1)$ curvature (see, e.g., \cite{griffiths.harris:poag}). In this way we identify $\s^{1,1}$ with the open subset of integrable $\delb$-operators whose curvatures are in fact Kähler forms. Under this identification, the almost complex structure described in the preceding paragraph is identified with the natural holomoprhic structure on the space of  integrable $\delb$-operators  

The symplectic structure $\Omega$ on $\s$ restricts to a Kähler metric on $\s^{1,1}$: given $A \in \s^{1,1}$, $\omega_A$ and $J$ pair to give a Riemannian metric $g_A$ on $M$; now $\Omega_A(a, Ja) =\|a\|^2$ is the $L^2(g_A)$-norm of $a$ and so the restriction of $\Omega$ to $\s^{1,1}$ pairs with the complex structure to give a Kähler metric on $\s^{1,1}$. 

This whole set-up is, of course, reminiscent of the moment-map description of the Hitchin--Kobayashi correspondence.  There one starts with a Hermitian vector bundle (of arbitrary rank) $E \to M$ and considers the space $\A^{1,1}$ of \emph{all} unitary connections with $(1,1)$-curvature or, equivalently, \emph{all} integrable $\delb$-operators. The key difference is that for the Hitchin--Kobayashi correspondence the symplectic structure on $\A^{1,1}$ is defined via a \emph{fixed} choice of Kähler metric on $M$. In our situation, however, the Kähler form $\omega_A$ on $M$ depends on the unitary connection $A \in \s^{1,1}$ and the symplectic structure on $\s^{1,1}$ is different from that in the Hitchin--Kobayashi correspondence.

Whilst the whole group $\G$ does not act by Kähler isometries on $\s^{1,1}$ (since the induced action on $M$ does not preserve $J$) the subgroup $\T = \Map(M, S^1)$ does. The action extends, at least locally, to a holomorphic (though not isometric) action of the complexification $\T^\C = \Map(M, \C^*)$. This is most easily seen by considering $\s^{1,1} \subset \mathcal H$ as an open set in the space of integrable $\delb$-operators in $L$. Now $\T^\C$ acts on $\mathcal H$ by pulling back. Note this is \emph{not} the same as pulling back the corresponding unitary connection by an element of $\T^\C$, since this does not preserve the property of being unitary. In terms of connections, the action of $f \in \Map(M, \C^*)$ on $A$ is given by 
\begin{equation}\label{action_of_TC}
f \cdot A = A + f \delb(f^{-1}) - \bar{f} \del (\bar f^{-1}).
\end{equation}
In particular, given a function $\phi \in C^\infty(M, \R)$, the action of $f = e^\phi$ is 
\begin{equation}\label{action_exp(phi)}
e^\phi \cdot A = A + \del \phi - \delb \phi
\end{equation}
and hence 
\begin{equation}\label{kahler_potential}
\omega_{e^\phi \cdot A}
=
\omega_A + \frac{i}{2\pi} \delb\!\del \phi.
\end{equation}
From this formula it is clear that the $\T^\C$-orbit of $A \in \s^{1,1}$ leaves the open set $\s^{1,1} \subset \mathcal H$. Indeed $e^\phi \cdot A$ remains in $\s^{1,1}$ precisely when $\phi$ is a Kähler potential for $\omega_A$. Nonetheless this calculation proves the following result.

\begin{lemma}
Fix $A_0 \in \s^{1,1}$. The map $A \mapsto \omega_A$ gives a surjection from $(\T^\C \cdot A_0) \cap \s^{1,1}$ to the space of Kähler metrics in $c_1(L)$.
\end{lemma}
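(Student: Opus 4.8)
The plan is to reduce the statement to the global $\del\delb$-lemma, since the essential computation has already been done in~(\ref{kahler_potential}). First observe that the map $A \mapsto \omega_A$ does land in the space of K\"ahler metrics in $c_1(L)$: for any $A \in \s^{1,1}$ the curvature $\omega_A = \frac{i}{2\pi}F_A$ is a positive $(1,1)$-form by definition, and its cohomology class is $c_1(L)$ by Chern--Weil. So the only content of the lemma is surjectivity, and for that it suffices to reach every such metric using the elements $e^\phi \cdot A_0$ with $\phi \in C^\infty(M, \R)$ real.

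I would fix an arbitrary K\"ahler metric $\omega$ in $c_1(L)$ and compare it with $\omega_0 = \omega_{A_0}$. Both are real closed $(1,1)$-forms representing the same class $c_1(L)$, and $M$ is a compact K\"ahler manifold, since it already carries the K\"ahler form $\omega_0$. Hence the global $\del\delb$-lemma applies and, after absorbing the normalising constant and the sign $\delb\del = -\del\delb$ into the potential, produces a real function $\phi \in C^\infty(M, \R)$ with $\omega - \omega_0 = \frac{i}{2\pi}\delb\del\phi$, matching exactly the shape of the right-hand side of~(\ref{kahler_potential}).

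Next I would feed this $\phi$ into the calculation carried out just before the statement. By~(\ref{kahler_potential}) we get $\omega_{e^\phi \cdot A_0} = \omega_0 + \frac{i}{2\pi}\delb\del\phi = \omega$. It then remains only to check the two membership conditions. Since $e^\phi$ is nowhere vanishing, it lies in $\T^\C = \Map(M, \C^*)$, so $e^\phi \cdot A_0$ belongs to the orbit $\T^\C \cdot A_0$; and its curvature equals $\omega$, which is a K\"ahler form by hypothesis, so $e^\phi \cdot A_0 \in \s^{1,1}$. Thus $e^\phi \cdot A_0 \in (\T^\C \cdot A_0) \cap \s^{1,1}$ maps to the prescribed $\omega$, which is surjectivity.

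There is no genuine obstacle here: the argument is a one-line application of the $\del\delb$-lemma on top of the formula for the $\T^\C$-action. The only points that demand care are the bookkeeping of conventions, so that the potential delivered by the lemma is precisely the one appearing in~(\ref{kahler_potential}), and the (automatic) observation that $M$ is compact K\"ahler, which is what licenses the lemma in the first place. I would also note that surjectivity is all that is asserted, and indeed all one can hope for, since the imaginary part of a general $f = e^{\phi + i\psi} \in \T^\C$ acts through $\T$ and leaves $\omega_A$ unchanged, so the map is far from injective.
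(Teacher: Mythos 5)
Your proof is correct and is essentially the paper's own argument: the paper derives formula (\ref{kahler_potential}) and simply states that ``this calculation proves the following result,'' leaving the appeal to the global $\del\delb$-lemma implicit, which you have just made explicit. The bookkeeping of memberships and the remark on non-injectivity are accurate but add nothing beyond what the paper intends.
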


From here we see that the Calabi conjecture fits into the general framework of moment maps in Kähler geometry. Namely, finding a Kähler form in $c_1(L)$ with volume form $\theta$ is the same as finding a zero of the moment map $\nu - \theta$ in a given complex orbit $(\T^\C \cdot A)\cap \s^{1,1}$. 

We recall a little of the general set-up alluded to here. The starting point is the action of a Lie group $G$ by holomorphic isometries on a Kähler manifold $X$, along with an equivariant moment map $\mu \colon X \to \g^*$. We suppose that the action extends to an action of $G^\C$, the complexifictaion of $G$. The problem is, given $x \in X$, to find $g \in G^\C$ such that $\mu(g \cdot x) = 0$. Since $\mu$ is $G$-invariant, this is really a question on the symmetric space $G^\C/G$. There is a function $F \colon G^\C/G \to \R$, called the Kempf--Ness function, whose critical points correspond to solutions of $\mu(g \cdot x) = 0$. Moreover, $F$ has the important property that it is convex along geodesics in $G^\C/G$. The downward gradient flow of $F$ provides a concrete way to attempt to find a zero of the moment map.

Applying this to the case of the $\T$-action on $\s^{1,1}$, we can give a moment map interpretation of some well-known facts concerning the Calabi conjecture. For a start, the symmetric space of interest is the quotient $\T^\C/\T$ of positive real functions $C^\infty(M, \R_+)$, or at least the open subset corresponding to $(\T^\C \cdot A) \cap \s^{1,1}$. Taking logarithms as in the discussion surrounding equations (\ref{action_exp(phi)}) and (\ref{kahler_potential}), we identify this space with the space of Kähler potentials
\[
\mathcal K = \left\{ \phi \in C^\infty(M, \R) : \omega_A + \frac{i}{2\pi} \delb\!\del \phi > 0\right\}.
\] 

Since $\T$ is an \emph{abelian} group, the symmetric metric on $\mathcal K$ should be flat. Indeed, tangent vectors correspond to infinitesimal Kähler potentials and, given our fixed choice of volume form $\theta$, the metric is given by the $L^2$ inner-product
\[
\langle f, g \rangle = \int_X fg\,\theta.
\]
In particular, the geodesics for this metric are simply the affine lines in $\mathcal K \subset C^\infty(M, \R)$. 

The Kemp--Ness function is determined by the requirement that when it is pulled back to a function on $G^\C$ its derivative in the imaginary directions is given by the moment map. So, in our situation, given $A \in \s^{1,1}$, the derivative of the pull-back of $F$ along the path $e^{t\phi} \cdot A$ is
\begin{equation}\label{energy_function}
\diff F (\phi) = \int_M \phi \left(\frac{\omega_A^n}{n!} - \theta\right). 
\end{equation}
As we saw above, on the level of Kähler forms, the tangent to the path $e^{t\phi}$ corresponds to the Kähler potential $\frac{i}{2\pi} \delb\!\del \phi$. So we can interpret $F$ as a function on the space of Kähler potentials, given by integrating (\ref{energy_function}) along a path. But this is precisely the definition of a well-known energy functional, the so-called ``$F_0$-functional'', described in, for example, \cite{tian:cmikg}. The standard moment-map theory tells us that $F$ is convex along affine lines in $\mathcal K$, something which can be verified directly. As has long been observed, this fact plays an important rôle in the study of the Calabi conjecture. In particular, since any two  points of $\mathcal K$ lie on a geodesic, we see immediately that a solution to the Calabi conjecture must be unique. 

We can also consider the downward gradient flow of $F$. In our situation, this is the flow of Kähler metrics given by
\begin{equation}\label{flow}
\frac{\del\!\omega}{\del\!t}
=
-\frac{i}{2\pi} \delb\!\del\left(\frac{\omega^n/n!}{\theta}\right)
\end{equation}
Given Yau's solution to the Calabi conjecture, one might expect that the flow (\ref{flow}) exists for all time and converges at infinity to the solution. This has very recently been proved by Cao--Keller \cite{cao-keller} and independently by Fang--Lai--Ma \cite{fang-lai-ma}. 

\subsection{Further questions}

Despite the fact that Yau has long since resolved the Calabi conjecture, this mo\-ment-map picture does raise interestings question. Typically there is a notion of ``stability'' associated to such a set-up; one then aims to show that a complex orbit is stable if and only if it contains a zero of the moment map. In our case, given a volume form $\theta$ we might hope to define the ``$\theta$-stability'' of $L \to X$. The general set-up would lead us to believe that $L \to X$ is $\theta$-stable (whatever that may mean) if and only if $c_1(L)$ contains a solution to the Calabi conjecture. Of course, we know that this is always the case and so perhaps the sought-after definition of $\theta$-stability is something trivially satisfied by all positive bundles $L \to X$. On the other hand, Yau's solution of the Calabi conjecture is a deep result, so one might optimistically speculate that $\theta$-stability (if indeed it can be defined) is some non-trivial property of $L \to X$ implied by Yau's theorem.

There are also other versions of the Calabi conjecture which are not yet completely understood, e.g., for non-compact manifolds or singular volume-forms. To approach this problem, one might consider a modification of the set-up described here, with appropriate boundary conditions at infinity or near the singularities. It would be very interesting to know if this moment-map approach sheds any light on these versions of the Calabi conjecture.

Another use of this interpretation of the Calabi conjecture may be as a testing ground for approaches to another famous---and as yet unresolved---conjec\-ture in Kähler geometry, namely the Donaldson--Tian--Yau conjecture concerning the existence of constant scalar curvature Kähler metrics (see \cite{donaldson:scasotv} for a formulation of this conjecture). Since the observation of Donaldson \cite{donaldson-fields} and Fujiki \cite{fujiki-msopamakm} that this problem can be described in terms of a moment-map, the general framework of such problems has guided much work on the subject. 

With this in mind, one may attempt to reprove facts about the Calabi conjecture, directly using the moment-map formalism, and in doing so learn more about the harder problem of constant scalar curvature. Whilst instability does not play a role in the Calabi conjecture (since a solution always exists) the comparison with constant scalar curvature metrics is certainly not devoid of interest. For example, just as the constant scalar curvature problem has a sequence of finite dimensional approximations (involving Bergman spaces and balanced embeddings, see \cite{donaldson-1}) so does the Calabi conjecture (see \cite{donaldson-snricdg}). If one could somehow use the finite dimensional approximations to re-solve problems related to the Calabi conjecture, this may shed light on exactly how to approach constant scalar curvature metrics in an analogous way. 

To be more precise, we give one instance of how this might work. The flow (\ref{flow}) associated to the Calabi conjecture is known to exist for all time, but the present proofs rest on Yau's estimates. These in turn rely on the maximum principle and hence depend critically on the fact that the flow is second order. The analogous flow in the case of constant scalar curvature metrics---the Calabi flow---is \emph{fourth} order and so it is far from clear how to approach it analytically. It is for this reason that long-term existence of the Calabi flow is still an open problem.

It may be possible instead to understand the flow via a sequence of finite dimensional flows on Bergman spaces. In \cite{fine-cflow} a sequence of flows on the Bergman spaces are defined and it is shown that the finite dimensional flows converge to Calabi flow for as long as it exists. Cao and Keller \cite{cao-keller} have very recently proved the analogous result in the case of the flow (\ref{flow}). Now, if one could prove directly that the finite dimensional flows of \cite{cao-keller} converge, one would have a new proof of the long-time existence of the flow (\ref{flow}) which was independent of Yau's estimates and, moreover, written in such a way as to stand a chance of generalising to the case of the more difficult and currently rather intractable Calabi flow. 

{\small
\bibliographystyle{alpha}
\bibliography{conns_symp_curv_refs}
}

{\small \noindent {\tt joel.fine@ulb.ac.be } \newline
D\'epartment de Math\'ematique,
Universit\'e Libre de Bruxelles CP218,\\ 
Boulevard du Triomphe,
Bruxelles 1050,
Belgique.\\}

\end{document}